\documentclass[a4paper,10pt,reqno, english]{amsart}

\usepackage{amsmath,amssymb,amscd,amsthm,amsfonts}
\usepackage{graphicx,subfigure}
\usepackage{hyperref}
\usepackage{dsfont}
\usepackage[nobysame, alphabetic]{amsrefs}
\usepackage{tikz}
\usepackage[capitalise]{cleveref}
\usepackage{mathrsfs}

\newtheorem{theorem}{Theorem}
\newtheorem{lemma}{Lemma}

\newtheorem{corollary}{Corollary}

\newtheorem{remark}{Remark}
\newtheorem{proposition}{Proposition}

\def\rr{\mathds{R}}

\DeclareMathOperator{\relint}{relint}
\DeclareMathOperator{\conv}{conv}
\DeclareMathOperator{\aff}{aff}

\title{Blocking codimension-one simplices on the moment curve}

\hypersetup{
  pdftitle={Blocking codimension-one simplices on the moment curve},
  pdfauthor={Pablo Sober\'on},
  hidelinks
}

\author[Sober\'on]{Pablo Sober\'on}
\address{Baruch College, City University of New York, One Bernard Baruch Way, New York, NY 10010, United States}
\email{psoberon@gc.cuny.edu}

\thanks{The research of P. Sober\'on is supported by NSF CAREER grant DMS-2237324 and a PSC-CUNY Trad B award.}

\keywords{Blocking Conjecture, blocking visibility, stabbing simplices, moment curve, cyclic polytopes, triangulations, empty pentagons}

\subjclass[2020]{Primary 52C10; Secondary 52B05}

\begin{document}

\begin{abstract}
We study $b_d(n)$, the minimum number of points needed to meet the relative interior of every
$(d-1)$-simplex spanned by an $n$-point set in general position in $\rr^d$.  In the plane, this is the parameter from the
Blocking Conjecture.  We improve the best known general planar lower bound to
\[
 b_2(n)\ge \frac{41}{13}n-O\left(\frac{n}{\log n}\right).
\]

For $n$ points on the moment curve in even dimension $2r$, we prove that at least $\frac{1}{r!}n^r\log n-O_r(n^r)$ points are needed to pierce the relative interior of all its codimension-one simplices, which exceeds the number of codimension-one faces in a triangulation by a $\log n$ factor.  For equally spaced points on the moment curve in odd dimensions, we construct an optimal blocking set whose size equals the maximum number of codimension-one faces in a triangulation.
\end{abstract}

\maketitle

\section{Introduction}

Let $P$ be a set of $n$ points in the plane, no three of which are collinear.
A set $Q\subset \rr^2\setminus P$ \emph{blocks} $P$ if every segment with
endpoints in $P$ contains a point of $Q$ in its relative interior.  We call the elements of $Q$ \textit{blockers}.  We denote
by $b(P)$ the smallest size of such a set and define
\[
 b(n)=\min\{b(P): |P|=n \text{ and }P\text{ is in general position}\}.
\]
The \emph{Blocking Conjecture}, as called by P\'or and Wood
\cite{Por2010}, states that $b(n)$ is superlinear in $n$.  In other words, $
 \lim_{n \to \infty}{b(n)}/{n}=\infty$.  Pach's construction of point sets with few
midpoints gives
\(
 b(n)\le n\exp\bigl(O(\sqrt{\log n})\bigr)
\)
\cite{Pach2003}.  In the other direction, Matou\v{s}ek proved $b(n)\ge2n-3$
and a lower bound of order $n\log n$ when the points are in convex position
\cite{Matousek2009}.  Dumitrescu, Pach, and T\'oth subsequently obtained
\[
 b(n)\ge\left(\frac{25}{8}-o(1)\right)n
\]
\cite{Dumitrescu2009}.  The problem is discussed in later work on visibility and obstacle numbers \cites{Mukkamala2012,GhoshGoswami2013, Balko2025}.

Our first result is a small improvement of the general lower bound.

\begin{theorem}\label{thm:planar-main}
 We have
 \(
 b(n)\ge \frac{41}{13}n-O\left(\frac{n}{\log n}\right)\).
\end{theorem}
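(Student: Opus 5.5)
The plan refines the Matoušek and Dumitrescu--Pach--Tóth scheme: build a planar straight-line graph on $P$ whose edges need pairwise distinct blockers, and then collect extra blockers hidden inside convex faces of that graph. Write $41/13=3+2/13$. The target is an accounting in which a triangulation supplies about $3n$ blockers, and every block of about $13$ points supplies $2$ more blockers lying in the interiors of faces. Those extra blockers cannot lie on any triangulation edge.

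First I would reduce to a bounded hull. If $P$ has $h$ hull vertices, Matoušek's convex-position bound gives $\Omega(h\log h)$ blockers. So we may assume $h=O(n/\log n)$; otherwise we are already done. The $O(n/\log n)$ loss in the statement absorbs this reduction, together with the boundary effects of the partition in the next step.

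Next, partition $P$ by parallel lines (or a recursive ham-sandwich/cutting argument) into $\Theta(n/k)$ consecutive groups of $k$ points each. Inside each group, locate a constant-size configuration: an empty convex pentagon or hexagon, or a more refined structure. Its existence would come from Erdős--Szekeres type results for empty polygons, applied to groups of size $k=\Theta(\log n)$ or to a constant size. I would then fix a triangulation $T$ of $P$ that contains, as unions of faces, the convex polygons found in the groups. The $3n-3-h$ edges of $T$ are pairwise non-crossing, so they receive distinct blockers. Any segment that is a diagonal of a chosen empty convex polygon and is not an edge of $T$ must be blocked too. Its blockers can be charged to the interior of that polygon, away from all edges of $T$, except where the diagonal's blocker sits on another diagonal already used as a triangulation edge. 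The extra blockers charged to different polygons are therefore distinct.

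The core is a local lemma. Take a convex pentagon triangulated by two diagonals. Blocking all $10$ segments requires a certain number of blockers strictly inside its triangles, counting possible coincidences of diagonal crossings in the pentagram. Combining this with the possibility of re-triangulating, i.e.\ choosing which diagonals belong to $T$, gives the gain of $1/8$ in Dumitrescu--Pach--Tóth. To reach $2/13$, I would prove a stronger local statement. Every set of $13$ points in general position, suitably embedded, contains a structure forcing $2$ interior blockers beyond its triangulation edges; examples are two disjoint empty convex pentagons, or an empty hexagon. Whenever the forced count is only $1$, one adjacent flip still forces an extra blocker elsewhere. This case analysis on small configurations, and making the charging disjoint across neighbouring groups, is the main obstacle. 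I would attack it by first classifying $13$-point order types, via convex layers and empty pentagon existence in sets of $\ge 10$ points, into the few cases where only one pentagon is guaranteed. For those cases I would show directly that an empty convex quadrilateral adjacent to the pentagon supplies the second extra blocker. Summing over groups gives $3n+\frac{2}{13}n-O(n/\log n)$.
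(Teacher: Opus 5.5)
\paragraph{Gap: the local lemma behind the constant $2/13$.} Your outer framework is the right one and matches the paper. A triangulation containing the boundaries of interior-disjoint empty pentagons forces $3n-h-3$ distinct blockers. Each such pentagon forces one more. Matou\v{s}ek's convex-position bound disposes of the case $h>4n/\log n$. The gap is the step that is supposed to produce the constant $2/13$. You need two surplus blockers per $13$ points, and you try to get them from a local lemma: every $13$-point set contains two disjoint empty pentagons, or an empty hexagon, or a substitute found by classifying $13$-point order types.

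None of this is available. Empty hexagons need not exist below $30$ points; there are $29$-point sets without them. The best known computer-assisted guarantees (Scheucher) are for $15$-point sets (two interior-disjoint $5$-holes) and $17$-point sets (two disjoint ones). You give no argument for $13$.

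Your fallback mechanism also fails. An empty convex quadrilateral yields no surplus at all: its two diagonals cross at one point, and a single blocker there serves both the diagonal used by $T$ and the other one. Attaching the quadrilateral to a pentagon does not help in general either. When the union is not convex, the new segments leave it, and their blockers may sit on edges of $T$ elsewhere.

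Finally, even with Scheucher's $15$-point theorem, pairwise disjoint groups cut by parallel lines or ham-sandwich cuts only give $3+2/15$.

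\paragraph{The missing idea: consecutive groups share points.} The paper cuts off $15$ points at a time. It rotates the supporting line of a hull edge $pq$ around $p$ until the closed halfplane contains exactly $15$ points. The final line passes through $p$ and one more point $q'$, and both also remain in the rest of the set. The cut-off set and the remainder have interior-disjoint hulls, and each meets $P$ only in itself, i.e.\ $P\cap\conv(P_i)=P_i$. So each step consumes only $13$ new points, giving $\lfloor (n-2)/13\rfloor$ sets of size $15$.

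Scheucher's theorem then gives two compatible $5$-holes in each set. Each $5$-hole contributes exactly one surplus blocker, because no three diagonals of a convex pentagon pairwise cross. Hence its five diagonals need at least three interior blockers, while $T$ uses only two of them.

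Thus $2/13=2/(15-2)$. The same accounting with Harborth's $10$-point theorem recovers $25/8=3+1/(10-2)$, rather than the re-triangulation mechanism you attribute to that bound.
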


The proof, given in \cref{sec:plane}, follows the same principle as the
$25/8$ bound.  A triangulation gives a lower bound for the number of points needed to pierce the relative interior of each segment, and
an empty pentagon forces us to use one additional point.  The earlier proof used
Harborth's theorem that every $10$-point set contains an empty pentagon
\cite{Harborth1978}.  We use Scheucher's computer-assisted theorem that every
$15$-point set contains two empty pentagons with disjoint interiors
\cite{Scheucher2020}.

We also consider a direct higher-dimensional extension.  Let $P\subset\rr^d$
be in general position, meaning that every set of at most $d+1$ points is
affinely independent.  Define
\[
 b_d(P)=\min\left\{|Q|:
 Q\subset\rr^d\setminus P,\quad
 Q\cap\relint\conv(A)\neq\emptyset
 \text{ for every }A\in\binom{P}{d}\right\},
\]
and let $b_d(n)$ be the minimum of $b_d(P)$ over all such $n$-point sets.
Therefore $b_2(P)=b(P)$ and $b_2(n) = b(n)$.  This parameter is the point-stabbing case for
$(d-1)$-simplices in the terminology of Cano, Hurtado, and Urrutia
\cite{Cano2014}.  Their extremal function maximizes over point
sets, while we take the minimum as in the planar Blocking Conjecture.  It is
also related in spirit, but not in quantifiers, to selection problems asking
for one flat that stabs many simplices \cite{Bukh2010}.

Every triangulation $T$ of $\conv (P)$ with vertex set equal to $P$ gives the immediate lower bound \(b_d(P)\ge f_{d-1}(T)\), where $f_{d-1}(T)$ denotes the number of $(d-1)$-faces of $T$.  We can define \( \tau_d(P)=\max_T f_{d-1}(T)\)
and ask when $b_d(P)$ is substantially larger than this simple lower bound.  This will be helpful to distinguish the behavior between $d=2r$ and $d=2r-1$ in our results.

The main higher-dimensional result gives a logarithmic excess for points on
the moment curve in every even dimension.  The moment curve $\gamma_d \subset \rr^d$ is the image of the function \(\gamma_d(t)=(t,t^2,\dots,t^d)\).  The moment curve is particularly useful in the study of polytopes.  Placing points on the moment curve generates cyclic polytopes, and intuitively corresponds to one of the strongest higher-dimensional generalizations of the notion of ``points in convex position'' in dimension two \cites{Gale1963, Ziegler1995, Grunbaum2003}.

\begin{theorem}\label{thm:even-main}
 Let $r$ be a positive integer, let $d = 2r, n \ge 2r$, and let $P$ be a set of $n$ points on the moment curve $\gamma_{d}(t)$.
 Then
 \(
 b_{2r}(P)\ge
 \sum_{\ell=1}^{n-2r+1}\frac{1}{\ell}
 \binom{n-\ell-r+1}{r}\).
 In particular,
 \(
 b_{2r}(P)\ge \frac{1}{r!}n^r\log n-O_r(n^r)\).
\end{theorem}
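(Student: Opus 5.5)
The plan is a fractional (weighting) argument. I will choose a family $\mathcal F$ of $(2r-1)$-simplices spanned by $P$ and give each $S\in\mathcal F$ a weight $w(S)=1/\ell(S)$, where $\ell(S)\ge 1$ is an integer ``length'' parameter. The key inequality to prove is that every point $q\in\rr^{2r}\setminus P$ satisfies
\[
\sum_{S\in\mathcal F,\ q\in\relint S} w(S)\le 1 .
\]
Given this, any blocking set $Q$ satisfies $|Q|\ge\sum_{q\in Q}\sum_{S\ni q}w(S)\ge \sum_{S\in\mathcal F} w(S)$. It then remains to show that the number of $S\in\mathcal F$ with $\ell(S)=\ell$ is (at least) $\binom{n-\ell-r+1}{r}$, for $1\le \ell\le n-2r+1$. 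The asymptotic form follows from $\binom{n-\ell-r+1}{r}=\frac{n^r}{r!}+O_r(\ell n^{r-1})$ and $\sum_{\ell\le n}1/\ell=\log n+O(1)$.

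The case $r=1$ guides the construction. Take all segments $p_ip_j$ with $i<j$ and $\ell=j-i$; there are $n-\ell$ of them, matching $\binom{n-\ell}{1}$. If $q$ lies on segments $S_1,\dots,S_k$, these pairwise cross, so their endpoints interlace as $i_1<\dots<i_k<j_1<\dots<j_k$. Between $i_m$ and $j_m$ there are $k-1$ further indices, so every gap is $\ge k$ and $\sum 1/\ell(S_m)\le k\cdot\frac1k=1$.

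For general $r$, I write $P=\{\gamma(t_1),\dots,\gamma(t_n)\}$ with $t_1<\dots<t_n$ and use the standard description of hyperplanes through $2r$ points of the moment curve. The hyperplane $H_S$ through $S=\{i_1<\dots<i_{2r}\}$ is the zero set of $\sum_k c_kx_k-c_0$, which restricted to $\gamma$ is the polynomial $\prod_k (t-t_{i_k})$. Hence the side of $H_S$ on which $\gamma(t_m)$ lies is determined by the parity of $|\{k: i_k<m\}|$. I take $\mathcal F$ to be the simplices whose index set consists of one ``long'' pair $\{a,a+\ell\}$ together with $r-1$ structured pairs, chosen so that the count is exactly $\binom{n-\ell-r+1}{r}$. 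One concrete attempt is to encode $S$ by the $r$ left endpoints of pairs, each pair having fixed internal gap, and to shift the indices by the $\ell+r-1$ forced positions; I would fix the exact shape by matching this binomial.

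The main obstacle is the generalization of the interlacing lemma. I must show that if $q\in\relint S_1\cap\dots\cap\relint S_k$ with all $S_m\in\mathcal F$, then each $\ell(S_m)\ge k$; equivalently, inside the long pair of $S_m$ lie indices coming from the other $k-1$ simplices. My first attempt is a Radon/Gale-type argument. Since $q\in\relint S_m\cap \relint S_{m'}$, the sets $S_m$ and $S_{m'}$ form a point in the intersection of two codimension-one simplices. By the sign pattern above, every vertex of $S_{m'}$ must lie on both sides of $H_{S_m}$ (otherwise $\relint S_{m'}$ would lie strictly on one side). This forces vertices of $S_{m'}$ to appear between consecutive vertices of $S_m$ in an odd-parity gap, and in particular inside the long pair of $S_m$ once the structured pairs are consecutive or nearly so. Summing over all $m'\neq m$ and using general position to make these indices distinct gives $\ell(S_m)\ge k$. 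This is the step I expect to need the most care, both in choosing $\mathcal F$ and in making the parity bookkeeping rigorous.
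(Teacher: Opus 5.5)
There is a genuine gap. Your framework matches the paper's: weights $1/\ell$ on a selected family with one free pair plus $r-1$ auxiliary pairs, a per-blocker bound of $1$, the count $\binom{n-\ell-r+1}{r}$, and a harmonic sum. But the two components that carry the proof, the precise family $\mathcal F$ and the crossing lemma, are left open, and your plan for closing them does not work as stated.

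In your encoding, matching $\binom{n-\ell-r+1}{r}$ forces the auxiliary pairs to be consecutive. It does not fix where the long pair sits among the $r$ blocks: first, last, or any fixed middle position all give the same count. Your one-hyperplane test (some vertex of $S'$ lies on the negative side of $H_S$, and vice versa) depends on this choice. Take $r=3$ with the long pair in the middle: $S=\{1,2,3,6,8,9\}$ and $S'=\{4,5,7,10,11,12\}$. They pass the test in both directions, since the negative sides are $\{4,5\}$ and $\{8,9\}$. Yet the free intervals $(3,6)$ and $(7,10)$ are disjoint, so no bound $\ell(S_m)\ge k$ follows.

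These two simplices do not actually meet. The paper's alternation lemma rules them out: the signed combination would need at least $2r+1$ sign changes, so an alternating subsequence of length $2r+2$ must use both free endpoints of each simplex. A single hyperplane cannot see this. Allowing ``nearly consecutive'' pairs would only enlarge the negative side further.

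The fix is to commit to the paper's family $A=\{i,j,k_1,k_1+1,\dots,k_{r-1},k_{r-1}+1\}$ with $i<j<k_1$ (or its mirror image). Then your argument goes through and is a slightly more elementary substitute for the paper's sign-change lemma:
\begin{itemize}
\item For $m\notin A$, $\prod_{a\in A}(t_m-t_a)<0$ exactly when $i<m<j$.
\item Since $S'\neq S$, $S'$ has a vertex off $H_S$. As $q\in H_S$ is a strictly positive combination of the vertices of $S'$, some vertex of $S'$ lies in $(i,j)$. Symmetrically, some vertex of $S$ lies in $(i',j')$.
\item If $i\le i'$, the vertex of $S$ in $(i',j')$ is at least $j$, so $j<j'$.
\item Then the vertex of $S'$ in $(i,j)$ cannot be $j'$ or any $k'_s$, so it is $i'$. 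Hence $i<i'<j<j'$.
\end{itemize}

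Your last step also needs repair. The distinctness must come from this proper crossing, not from general position, which says nothing about index distinctness; different members of $\mathcal F$ can share vertices. Pairwise properly crossing intervals have pairwise distinct endpoints. So the other $k-1$ free endpoints lying inside $(i_m,j_m)$ are distinct integers, which gives $j_m-i_m\ge k$.
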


The proof in \cref{sec:even} is an extension of Matou\v{s}ek's argument for a
convex polygon.  This is essentially a charging and a double-counting argument.  The key step in the proof is that we assign weights only to a selected family of $(d-1)$-simplices spanned by $P$, which allows us to simplify the analysis significantly, yet still preserve the $n^r\log n$ leading term.

The boundary of a cyclic $2r$-polytope has $\Theta(n^r)$ facets, and the Upper
Bound Theorem implies $\tau_{2r}(P)=\Theta_r(n^r)$; see, for example,
\cites{Ziegler1995,Stanley1975}.  This means that \cref{thm:even-main} implies
\[
 \frac{b_{2r}(P)}{\tau_{2r}(P)}=\Omega_r(\log n).
\]
This contrasts with the following odd-dimensional construction.

\begin{theorem}\label{thm:odd-main}
 Let $r\ge2$, let $n\ge2r$, and let \( P=\{\gamma_{2r-1}(1),\gamma_{2r-1}(2),\dots,
       \gamma_{2r-1}(n)\}\).
 Then
 \[
 b_{2r-1}(P)=
 (n-2r+2)\binom{n-r}{r-1}=\tau_{2r-1}(P).
 \]
\end{theorem}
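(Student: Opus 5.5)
The equality $b_{2r-1}(P)=\tau_{2r-1}(P)$ splits into three inequalities: $b_{2r-1}(P)\ge\tau_{2r-1}(P)$, which holds for every point set by the triangulation bound from the introduction; $\tau_{2r-1}(P)\ge (n-2r+2)\binom{n-r}{r-1}$, to be shown by an explicit triangulation; and $b_{2r-1}(P)\le (n-2r+2)\binom{n-r}{r-1}$, to be shown by an explicit blocking set. Together these force all three quantities to be equal. This way I never need an independent upper bound on $\tau$.

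For the triangulation I will use the double-counting identity $2f_{d-1}(T)=(d+1)f_d(T)+f_{d-1}(\partial C)$, where $C=\conv(P)$ is the cyclic $(2r-1)$-polytope. By Gale evenness, $f_{d-1}(\partial C)$ is known in closed form. I then need a triangulation of $C$ with as many maximal simplices as possible. The natural candidate in odd dimension is the "lower" or "staircase" triangulation coming from lifting $P$ to the moment curve in $\rr^{2r}$. Its maximal simplices are the $2r$-subsets satisfying an evenness condition, which I expect to number $\binom{n-r}{r}$ up to a shift. Plugging these counts into the identity should give exactly $(n-2r+2)\binom{n-r}{r-1}$. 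The case $r=2$ is a useful check: $f_2=2\binom{n-2}{2}+(n-2)=(n-2)^2$. I will also verify this sum with Pascal-type identities in general.

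For the blocking set I will exploit equal spacing. For integer nodes, a positive rational combination $\sum\lambda_i\gamma(a_i)$ is determined by its first $2r-1$ power-moments. Different $(2r-2)$-simplices, i.e.\ $(2r-1)$-subsets $A$, can therefore share a relative-interior point whenever their moment data can be matched. This is the Prouhet--Tarry--Escott phenomenon, already visible in $d=1$, where the midpoint of $\{a,b\}$ depends only on $a+b$. The plan is to index blockers by pairs consisting of a "position" $p\in\{1,\dots,n-2r+2\}$ and an $(r-1)$-subset of an $(n-r)$-element ground set. To each $(2r-1)$-subset $A=\{a_1<\dots<a_{2r-1}\}$ I assign such a pair by reading off the gaps of $A$, pairing consecutive elements Gale-style, with one unpaired element determining $p$. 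I then show that the candidate point $q_{p,S}$ is a strictly positive combination of $\gamma(a_i)$, so that it lies in $\relint\conv(A)$. I would first build $q_{p,S}$ from the unpaired element together with paired consecutive points, using discrete "midpoint" averages of the form $\tfrac12(\gamma(a)+\gamma(a+2k))$ in the spirit of the $d=1$ picture, and then use induction on $r$ by projecting away the last coordinate.

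I expect this last step to be the main obstacle. It requires choosing the assignment $A\mapsto(p,S)$ and the coordinates of $q_{p,S}$ so that every subset mapped to the same label really contains $q_{p,S}$ in its relative interior, which is a total-positivity (Vandermonde sign) computation. I would attack it by first settling $r=2$, the $3$-dimensional case, by hand, where there are $(n-2)^2$ blockers for the triangles. From that case I would extract the pattern and then generalize by induction on $r$, using that links of vertices in cyclic polytopes are again cyclic polytopes.
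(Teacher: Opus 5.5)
Your reduction to $b_{2r-1}(P)\ge\tau_{2r-1}(P)\ge(n-2r+2)\binom{n-r}{r-1}\ge b_{2r-1}(P)$ agrees with the paper, and so does your handling of the middle inequality. You lift to $\gamma_{2r}$ and take the lower regular triangulation, whose maximal simplices are the $\binom{n-r}{r}$ unions of $r$ disjoint consecutive pairs. You then double count using the $2\binom{n-r}{r-1}$ facets of the cyclic polytope.

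The genuine gap is the upper bound, which is the only non-routine part of the theorem. You never specify the labelling $A\mapsto(p,S)$, never write down $q_{p,S}$, and give no relative-interior argument. The tools you suggest do not lead to one:
\begin{itemize}
\item Midpoints $\tfrac12(\gamma(m-k)+\gamma(m+k))$ with the same centre $m$ already differ in the second coordinate, which equals $m^2+k^2$. So the $d=1$ picture does not persist.
\item Projecting away the last coordinate makes your $(2r-2)$-simplices full-dimensional in $\rr^{2r-2}$. There is then no codimension-one blocking statement to induct on, and common points downstairs need not lift.
\end{itemize}
Your labels have the right cardinalities. However, the right labels are not Gale-type pairs plus an unpaired element, and the mechanism is not a Prouhet--Tarry--Escott matching of moments. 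It is a shared flat through shared vertices.

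The missing idea is as follows.
\begin{itemize}
\item Write each $(2r-1)$-subset in interleaved form $a_1<u_1<a_2<\cdots<u_{r-1}<a_r$. Group these subsets by $U=(u_1,\dots,u_{r-1})$ together with $s=a_1+\cdots+a_r$.
\item Let $\ell_f$ be the affine function with $\ell_f(\gamma_{2r-1}(t))=f(t)$. Put $R(t)=\prod_j(t-u_j)$ and $A(t)=\prod_i(t-a_i)=t^r-st^{r-1}+\sum_{k\le r-2}c_kt^k$.
\item The hyperplane spanned by $A\cup U$ is $\{\ell_{RA}=0\}$. Hence it contains the $(r-1)$-flat $L$ cut out by $\ell_{Rt^k}=0$ for $0\le k\le r-2$ together with $\ell_{R(t^r-st^{r-1})}=0$. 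This flat depends only on $(U,s)$ and contains $C=\conv\{\gamma(u_j)\}$.
\item Take $\lambda_i\propto 1/(R(a_i)A'(a_i))$. These weights are positive because interleaving gives $\operatorname{sgn}R(a_i)=\operatorname{sgn}A'(a_i)$.
\item The Lagrange identities $\sum_i a_i^k/A'(a_i)=0$ for $k\le r-2$ put $z_A=\sum_i\lambda_i\gamma(a_i)$ in $L$. The identity for $k=r-1$ gives $\ell_{Rt^{r-1}}(z_A)>0$, while $\ell_{Rt^{r-1}}$ vanishes on $\aff(C)$. So all $z_A$ lie on one side of $\aff(C)$ inside $L$.
\item Sufficiently close to any $x\in\relint C$, the points of $L$ strictly on that side lie in $\relint\conv(C\cup\{z_A\})\subseteq\relint\conv(A\cup U)$. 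The group is finite, so a single point of $L$ just off $x$ blocks the whole group.
\item Equal spacing enters only in the count. There are $\binom{n-r}{r-1}$ choices of $U$, and for fixed $U$ the sums $s$ fill an interval of $n-2r+2$ integers.
\end{itemize}
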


Therefore, in odd dimension the $\log n$ excess on the number of blockers disappears.  We establish the difference in behavior based on parity of the dimension only
for moment-curve configurations; it is unclear how much of it persists
for arbitrary point sets.

In terms of lower bounds, a known bound on the number of simplices in triangulations gives the following linear lower bound for $b_d(n)$ for $d \ge 3$.

\begin{proposition}\label{thm:universal-intro}
 Let $d\ge 3$ be an integer.  Then, for $n$ sufficiently large we have ${b_d(n) \ge \binom{d+1}{2}n + \Omega_d(\log n)}$.
\end{proposition}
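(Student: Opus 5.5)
The plan is to bound $b_d(P)$ below by $f_{d-1}(T)$ for a triangulation $T$ of $\conv(P)$ with vertex set exactly $P$, and to show that every $n$-point set in general position in $\rr^d$ has such a triangulation with at least $\binom{d+1}{2}n+\Omega_d(\log n)$ codimension-one faces. The first inequality is the observation from the introduction. The relative interiors of the $(d-1)$-faces of $T$ are pairwise disjoint, and each is the relative interior of $\conv(A)$ for some $A\in\binom{P}{d}$. So every blocking set needs a distinct point in each of them, which gives $b_d(P)\ge f_{d-1}(T)$. It therefore suffices to bound $\tau_d(P)$ from below.

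To count faces, I will relate $f_{d-1}(T)$ to $f_d(T)$ by double counting incidences between $d$-simplices and their facets. Each $d$-simplex has $d+1$ facets. Each interior $(d-1)$-face lies in two $d$-simplices and each boundary face in one, so
\[
 (d+1)f_d(T)=2f_{d-1}(T)-h,
\]
where $h$ is the number of boundary $(d-1)$-faces, that is, the facets of the triangulated boundary of $\conv(P)$. Hence $f_{d-1}(T)=\frac{(d+1)f_d(T)+h}{2}$, and it suffices to show that some triangulation has $f_d(T)\ge dn-c_d+\Omega_d(\log n)$. The boundary term $h$ is nonnegative, so it can only help. With this bound,
\[
 f_{d-1}\ge \tfrac{d+1}{2}\bigl(dn - c_d\bigr)+\Omega(\log n)=\binom{d+1}{2}n+\Omega_d(\log n)
\]
for $n$ large.

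For the number of full-dimensional simplices I will invoke the known result on triangulations alluded to before the statement. Every $n$-point set in general position in $\rr^d$ admits a triangulation with at least $dn$ minus a constant, plus an extra term of order $\log n$, many $d$-simplices. In dimension three this is the known bound that every point set has a triangulation using many tetrahedra, and the higher-dimensional analogue appears in the literature on the maximum number of simplices in triangulations.

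The main obstacle is pinning down this triangulation bound precisely, and in particular the $\Omega(\log n)$ surplus. If it is not available off the shelf, I would build the triangulation by a placing (incremental) construction. Order the points suitably, for instance using Erd\H{o}s--Szekeres-type structures (a cup/cap or a long curve-like subsequence of length $\Omega(\log n)$). An interior insertion splits a simplex into $d+1$ pieces, contributing $d$ new simplices. Insertions beyond the current hull, or stellar subdivisions along the long convex-position chain, each gain at least one extra simplex, and there are $\Omega(\log n)$ such insertions. Combining this count with the identity above yields the proposition.
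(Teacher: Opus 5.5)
Your proof is essentially the paper's. The paper takes the Aichholzer--F\'abila-Monroy--Hackl--Huemer--Urrutia triangulation with at least $dn+\Omega_d(\log n)$ $d$-simplices and double counts facets, each counted at most twice; this is your identity $(d+1)f_d(T)=2f_{d-1}(T)-h\le 2f_{d-1}(T)$ combined with $b_d(P)\ge f_{d-1}(T)$. That theorem exists in the stronger form $dn+\Omega_d(\log n)$, so your placing fallback is unnecessary, and it would not work as written: if $\conv(P)$ is a simplex, inserting the interior points one at a time gives only $1+d(n-d-1)$ simplices, and you do not supply the argument that would produce the logarithmic surplus.
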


We prove \cref{thm:planar-main} first in \cref{sec:plane} and \cref{thm:universal-intro} in
\cref{sec:framework}.  The even and odd-dimensional moment-curve results are
proved in \cref{sec:even} and \cref{sec:odd}, respectively.

\section{Linear lower bounds}

\subsection{The planar Blocking Conjecture}\label{sec:plane}

A \emph{$5$-hole} of a planar point set $P$ is a set of five points in convex
position whose convex hull contains no other point of $P$.  We call two
$5$-holes \emph{compatible} if the interiors of their convex hulls are
disjoint.

The proof has one simple idea.  The edges of a triangulation require distinct
blockers.  Inside an empty pentagon, however, the five diagonals require at
least three blockers, while a triangulation uses only two diagonals.  Each
compatible pentagon therefore contributes one blocker beyond the
triangulation count.


\begin{lemma}\label{lem:pentagon-surplus}
 Let $P$ be a set of $n$ points in general position in the plane, and let $h$
 be the number of vertices of $\conv(P)$.  If $P$ contains $k$ pairwise
 compatible $5$-holes, then
 \[
 b(P)\ge 3n-h-3+k.
 \]
\end{lemma}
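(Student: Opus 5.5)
Fix a blocking set $Q$ for $P$. The plan is to build a triangulation of $\conv(P)$ that contains, as its only edges inside each of the $k$ given $5$-holes, two diagonals of that hole. We then count blockers edge by edge, and charge one extra blocker to each pentagon.

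\textbf{Step 1: the triangulation.} Since the $k$ holes are pairwise compatible and each is empty, their convex hulls $H_1,\dots,H_k$ have disjoint interiors and their boundary edges do not cross. We start from the plane straight-line graph formed by the boundaries of $\conv(P)$ and of all $H_i$. We complete it to a triangulation $T$ of $P$, triangulating each $H_i$ by two non-crossing diagonals. Any triangulation of $n$ points in general position with $h$ hull vertices has $3n-h-3$ edges. Each edge is a segment spanned by $P$ and contains no other point of $P$, so its relative interior needs a blocker. Distinct edges of $T$ have disjoint relative interiors, so each blocker lies on at most one edge. Hence we get $3n-h-3$ distinct points of $Q$ that lie on edges of $T$.

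\textbf{Step 2: one extra blocker per pentagon.} Inside $H_i$ the five diagonals form a pentagram. Each diagonal crosses exactly two others, and every diagonal must contain a blocker in its relative interior. A point can lie on at most two diagonals, namely at a crossing, since no three diagonals of a convex pentagon are concurrent when $P$ is in general position. The diagonals can be grouped into pairs sharing a point in at most two ways, because any two crossings share at most one diagonal. So a blocker covers at most two diagonals, and covering five diagonals needs at least $\lceil 5/2\rceil = 3$ points of $Q$ in $\relint H_i$. Here I use that any point of a diagonal other than its endpoints lies in the interior of $H_i$.

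\textbf{Step 3: counting inside each hole.} Of those at least three points in $\relint H_i$, the points lying on the two chosen diagonals of $T$ are among the blockers already counted in Step 1. Those edges are disjoint from each other except at endpoints, so at most one counted blocker per diagonal is needed. The subtle point is that more than one point of $Q$ may lie on a $T$-diagonal, so I will count carefully. Assign to each edge $e$ of $T$ one chosen blocker $q_e$. Inside $H_i$, the points of $Q\cap\relint H_i$ number at least $3$, while the chosen $q_e$ in $\relint H_i$ number exactly $2$, coming from the two diagonals. So at least one point of $Q\cap \relint H_i$ is unchosen. The interiors are disjoint, so these extra points are distinct across $i$, and they are distinct from all chosen $q_e$. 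This gives $|Q|\ge 3n-h-3+k$.

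The main obstacle is Step 1. We need the $H_i$ to be simultaneously embeddable as faces in one triangulation, which requires that their boundary edges pairwise do not cross. Two compatible convex pentagons with disjoint interiors can have properly crossing edges only if their interiors overlap. So disjoint interiors of convex polygons rule out crossings, and any non-crossing edge set extends to a triangulation. I expect this to need only a short remark.
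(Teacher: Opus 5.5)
Your proposal is correct and follows essentially the same route as the paper. Both arguments extend the pentagon boundaries to a triangulation, take one blocker per edge, and use that the five diagonals of each hole need three blockers while only two chosen blockers (those on the triangulation's two diagonals) lie inside the hole. Your Step 3 makes explicit bookkeeping the paper leaves implicit. Your claim that no three diagonals are concurrent is more cleanly justified by noting that three pairwise-crossing diagonals would need six distinct endpoints, rather than by appealing to general position.
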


\begin{proof}
 Suppose we have a set $S$ of blockers.  Since the pentagons have disjoint interiors, their boundary edges form a
 plane straight-line graph.  Extend this graph to a triangulation $T$ of $P$.
 The triangulation has $3n-h-3$ edges, whose relative interiors are pairwise
 disjoint.  Choose one blocker from $S$ on each edge of $T$; these blockers are all
 distinct.

 Fix one of the $5$-holes.  The restriction of $T$ to the pentagon uses two
 internal diagonals.  Its five diagonals require at least three blockers, as every blocker is contained in at most two diagonals, all
 in the interior of the pentagon.  Therefore, the pentagon contains a blocker
 not among the two chosen for its triangulation diagonals.  The pentagon
 interiors are disjoint, so these additional $k$ blockers are distinct.
\end{proof}

We use Scheucher's theorem to obtain a large number of pairwise compatible $5$-holes.  See \cref{fig:pentagons}.  The regions are obtained recursively by cutting off $15$ points
with a rotating support line.  Consecutive regions share the two points on
the cutting line, so each step consumes only $13$ new points.

\begin{figure}
    \centering
    \includegraphics[]{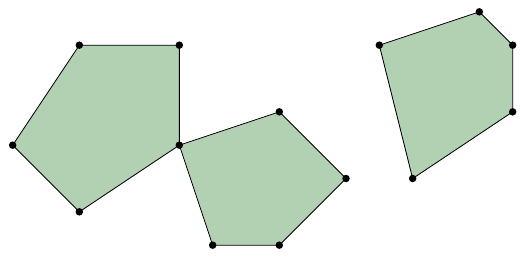}
    \caption{Three compatible $5$-holes.  Notice that the pentagons do not need to be disjoint, only their interiors have to be disjoint.  To find two disjoint $5$-holes, Scheucher shows that at most $17$ points are needed.  To find two compatible $5$-holes, $15$ points are enough.}
    \label{fig:pentagons}
\end{figure}

\begin{lemma}\label{lem:decomposition}
 Every set $P$ of $n$ points in general position contains \( s=\left\lfloor\frac{n-2}{13}\right\rfloor \) subsets $P_1,\dots,P_s$, each of size $15$, such that \( P\cap\conv(P_i)=P_i \)
 for every $i$, and the interiors of the convex hulls $\conv(P_i)$ are
 pairwise disjoint.
\end{lemma}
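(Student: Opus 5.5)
The plan is a rotating-line sweep around one fixed hull vertex. Let $p$ be the lowest point of $P$; after a tiny rotation of coordinates, no two points share a $y$-coordinate, and $p$ is a vertex of $\conv(P)$. The remaining $n-1$ points lie strictly above the horizontal line through $p$. By general position, their angular order around $p$ is strict: no two of them are collinear with $p$. Label them $q_1,\dots,q_{n-1}$ in increasing order of angle.

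Each $P_i$ will be a contiguous angular block together with $p$:
\[
P_i=\{p,q_{13(i-1)+1},\dots,q_{13(i-1)+14}\},
\]
which is $15$ points. Consecutive blocks share exactly one $q$-point, namely $q_{13i+1}$, and also $p$. So the two points on the common cutting line are $p$ and $q_{13i+1}$, matching the description in the text. We need $13(s-1)+14\le n-1$, that is, $13s\le n-2$, which gives $s=\lfloor (n-2)/13\rfloor$.

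It remains to check the two required properties.

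\emph{Containment.} Fix $i$, and let $W_i$ be the closed angular wedge at $p$ bounded by the rays through $q_{13(i-1)+1}$ and $q_{13(i-1)+14}$. This wedge has angle less than $\pi$, since every $q_j$ lies in the open upper half-plane relative to $p$. Because $P_i\subset W_i$ and $W_i$ is convex, we get $\conv(P_i)\subset W_i$. The only points of $P$ in $W_i$ are $p$, the $q_j$ strictly inside the angular range, and the two boundary points; no other point lies on a boundary ray, by general position. All of these belong to $P_i$, so $P\cap\conv(P_i)=P_i$.

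\emph{Disjoint interiors.} For $i<j$, the wedges $W_i$ and $W_j$ meet at most along a single ray from $p$. That happens exactly when $j=i+1$, where they share the ray through $q_{13i+1}$. Hence the open wedges are disjoint. The interior of $\conv(P_i)$ lies in the interior of $W_i$, so these interiors are pairwise disjoint.

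There is no serious obstacle. The only delicate point is handling ties in the angular order and boundary rays, and general position together with the choice of a lowest point resolves it. Note also that the sets $P_i$ need not be in convex position; only the emptiness $P\cap\conv(P_i)=P_i$ is required. That is exactly what lets Scheucher's theorem apply to each $P_i$, producing two compatible $5$-holes that are $5$-holes of $P$ itself.
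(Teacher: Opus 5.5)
Your proof is correct and is essentially the paper's argument. The paper recursively rotates the supporting line of a hull edge $pq$ about $p$ until the cut-off closed halfplane contains $15$ points, so consecutive pieces share $p$ and the new point $q'$. Your fan of wedges around a fixed lowest point $p$ is that same recursion with the pivot held fixed, taking $pq'$ as the next edge each time. Fixing the pivot makes the emptiness and disjoint-interior checks immediate through the convex wedges $W_i$, and your count $13s\le n-2$ gives the same $s=\lfloor (n-2)/13\rfloor$.
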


\begin{proof}
 We describe one recursive step.  Let $S\subseteq P$ have at least $15$
 points and satisfy $P\cap\conv(S)=S$.  Choose an edge $pq$ of $\conv(S)$ and
 rotate its supporting line around $p$ until the
 closed halfplane that initially contained only $p$ and $q$ contains exactly
 $15$ points of $S$, one of which is $q$.  General position implies that the final line goes through
 exactly two points of $S$: one is $p$ and the other some other point $q'$.

 Let $A$ be the $15$ points in the cut-off closed halfplane and let $S'$ be
 the points in the opposite closed halfplane.  The two sets share $p$ and $q'$, and therefore
 \[
 |S'|=|S|-13.
 \]
 Their convex hulls have disjoint interiors.  Moreover,
 $P\cap\conv(A)=A$ and $P\cap\conv(S')=S'$, because both convex hulls are
 contained in $\conv(S)$ and in their respective closed halfplanes.

 Let $P_1=A$ and continue with $S'$ with the same process to construct $P_2,\dots,P_s$.  All later regions lie on the opposite
 side of the line used to cut off $A$, so the convex hulls of $P_1$ and any of $P_2,\dots,P_s$ will have
 pairwise disjoint interiors.  The process continues precisely while the
 remaining set has at least $15$ points, giving
 $\lfloor(n-2)/13\rfloor$ total sets.
\end{proof}

\begin{corollary}\label{cor:pentagons}
 Every set $P$ of $n$ points in general position contains at least \( 2\cdot\left\lfloor\frac{n-2}{13}\right\rfloor \) pairwise compatible $5$-holes.
\end{corollary}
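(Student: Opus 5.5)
We combine \cref{lem:decomposition} with Scheucher's theorem, applied separately inside each of the $15$-point regions. First, \cref{lem:decomposition} gives subsets $P_1,\dots,P_s$ with $s=\lfloor (n-2)/13\rfloor$. Each has exactly $15$ points and satisfies $P\cap\conv(P_i)=P_i$, and the interiors of the sets $\conv(P_i)$ are pairwise disjoint. Each $P_i$ is in general position, being a subset of $P$. Scheucher's theorem \cite{Scheucher2020} then gives two $5$-holes of $P_i$, say $H_i^1$ and $H_i^2$, which are compatible, i.e.\ $\operatorname{int}\conv(H_i^1)\cap\operatorname{int}\conv(H_i^2)=\emptyset$.

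Next we check that these are $5$-holes of $P$ and not only of $P_i$. Since $H_i^j\subseteq P_i$, we have $\conv(H_i^j)\subseteq\conv(P_i)$. Hence
\[
P\cap\conv(H_i^j)\subseteq P\cap\conv(P_i)=P_i .
\]
So any point of $P$ in $\conv(H_i^j)$ lies in $P_i$, and emptiness with respect to $P_i$ gives emptiness with respect to $P$. Convex position of the five points does not depend on the ambient set.

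Finally we check pairwise compatibility of the whole family. Two holes from the same $P_i$ are compatible by Scheucher's theorem. Now take $H\subseteq P_i$ and $H'\subseteq P_j$ with $i\neq j$. Then $\operatorname{int}\conv(H)\subseteq\operatorname{int}\conv(P_i)$ and $\operatorname{int}\conv(H')\subseteq \operatorname{int}\conv(P_j)$, and these two interiors are disjoint by \cref{lem:decomposition}. The holes are also distinct as sets, since their hulls have nonempty disjoint interiors. This gives $2s$ pairwise compatible $5$-holes, as claimed.

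The only delicate point is the emptiness step, which relies on the property $P\cap\conv(P_i)=P_i$ supplied by \cref{lem:decomposition}. That is exactly why the lemma was stated with this property, so the step is straightforward.
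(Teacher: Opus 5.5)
Your proof is correct and is the same argument as the paper's: apply Scheucher's theorem inside each of the $\lfloor (n-2)/13\rfloor$ regions from \cref{lem:decomposition}, use $P\cap\conv(P_i)=P_i$ to upgrade emptiness from $P_i$ to $P$, and use the disjointness of the interiors of the sets $\conv(P_i)$ for compatibility across regions. You also spell out details that the paper leaves implicit, namely the monotonicity of interiors under inclusion and the distinctness of the holes.
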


\begin{proof}
 Apply Scheucher's theorem \cite{Scheucher2020} to each set from
 \cref{lem:decomposition}.  The condition
 $P\cap\conv(P_i)=P_i$ makes the resulting pentagons empty with respect to all
 of $P$, and pentagons obtained in different regions have disjoint interiors.
\end{proof}

Combining \cref{lem:pentagon-surplus,cor:pentagons}, we obtain
\begin{equation}\label{eq:finite-planar}
 b(P)\ge 3n-h-3+2\left\lfloor\frac{n-2}{13}\right\rfloor
 \ge \frac{41}{13}n-h-\frac{69}{13}.
\end{equation}

To remove the term $h$, we use Matou\v{s}ek's convex-position estimate.  If
$X$ consists of $h$ points in convex position, then
\begin{equation}\label{eq:convex-bound}
 b(X)\ge h\log(h/2).
\end{equation}
This follows immediately from 
\cite{Matousek2009}.  All logarithms are natural.

\begin{proof}[Proof of \cref{thm:planar-main}]
 Let $P$ be an $n$-point set in general position and let $h$ be the number of
 vertices of $\conv(P)$.

 If $h\le 4n/\log n$, then \eqref{eq:finite-planar} gives
 \[
 b(P)\ge
 \frac{41}{13}n-\frac{4n}{\log n}-\frac{69}{13}.
 \]

 Suppose that $h>4n/\log n$.  Every blocking set for $P$ also blocks the
 segments determined by the vertices of the convex hull.  By \eqref{eq:convex-bound},
 \[
 b(P)\ge h\log(h/2)
 >\frac{4n}{\log n}\log\left(\frac{2n}{\log n}\right)
 =\frac{4 n}{\log n}(\log n + \log 2 - \log (\log (n)))=(4-o(1))n.
 \]
 This is larger than $(41/13)n$ for all sufficiently large $n$.  The two
 cases prove the theorem.
\end{proof}

\begin{remark}\label{rem:pentagon-parameter}
 If every $m$-point set in general position contains $t$
 pairwise compatible $5$-holes, the recursive argument proves
 \[
 b(n)\ge\left(3+\frac{t}{m-2}-o(1)\right)n.
 \]
 In other words, this method improves $41/13$ when
 \(
 \frac{t}{m-2}>\frac{2}{13}.
 \)
 Any result obtained with this argument will still be a linear lower bound.
\end{remark}

There have been recent improvements on the number of $5$-holes in a set of $n$ points in the plane.  The current best lower bound is $\Omega (n^{20/11})$ by Astudillo-Marb{\'a}n and Sol\'e-Pi \cite{astudillo2026}.

\subsection{Lower bounds in higher dimensions}\label{sec:framework}

We now turn to $P\subset\rr^d$ in general position.  As a first simple observation, for every triangulation $T$ of $P$,
\( b_d(P)\ge f_{d-1}(T)\).

Aichholzer, F\'abila-Monroy, Hackl, Huemer, and Urrutia showed that for $d \ge 3$, and $n$ sufficiently large, every set of $n$ points in $\rr^d$ in general position has a triangulation with at least $dn + \Omega_d(\log n)$ maximal simplices \cite{Aichholzer2014}.  We use their result to prove \cref{thm:universal-intro}.

\begin{proof}[Proof of \cref{thm:universal-intro}]
    Consider a triangulation with at least $dn + \Omega_d(\log n)$ maximal $d$-simplices.  Every simplex has $d+1$ facets, and every facet is counted at most twice.  Therefore,
    \[
    b_d(n) \ge \frac{d+1}{2}\Big( dn + \Omega_d(\log n)\Big) = \binom{d+1}{2}n + \Omega_d(\log n).
    \]
\end{proof}

\section{Even-dimensional moment curves}\label{sec:even}

The main idea to prove \cref{thm:even-main} is to extend Matou\v{s}ek's planar argument.  We first give a sketch of Matou\v{s}ek's idea, since it will be helpful for the arguments that follow.  Given $n$ points in the plane in convex position, we start by measuring the ``length'' of a chord by the minimum number of edges of the convex hull that it leaves on one side.  A chord that is part of the boundary of the convex hull has length $1$, a chord that skips one vertex has length $2$, and so on.  

Matou\v{s}ek assigns weight $1/\ell$ to a chord of length $\ell$.  Among all chords passing through one blocker, choose a
shortest one.  Every other chord uses a different vertex on its shorter arc,
so there are at most $\ell-1$ other chords and the total weight received by the
blocker is at most one.  The rest of the argument is to show that the total weight of the system is $\Omega(n \log n)$ via a simple harmonic sum, which implies the lower bound on the number of blockers needed.

We use the same charging argument in dimension $2r$.  We will assign weights only to a certain kind of codimension-one simplices.  The selected simplices need to satisfy two conditions:
\begin{itemize}
    \item there must be a simple way to assign weights to simplices so that the total weight of all simplices that contain a blocker is $O(1)$, and
    \item there must be sufficiently many simplices with weights assigned so that the total weight of the system is $\Omega (n^r \log n)$.
\end{itemize}

We describe the construction of this special family of simplices and their assigned weights.  The order of the vertices given by their placement on the moment curve is essential.

We use the following standard alternation property of polynomial systems (see, e.g., \cites{Karlin1963, Ziegler1968}).  We include an elementary proof for completeness.

\begin{lemma}\label{lem:sign-changes}
 Let $x_1<\cdots<x_s$ and let $c_1,\dots,c_s$ be nonzero real numbers such
 that
 \[
 \sum_{i=1}^s c_i x_i^j=0
 \qquad\text{for }j=0,1,\dots,d.
 \]
 Then the sign sequence
 $\operatorname{sgn}(c_1),\dots,\operatorname{sgn}(c_s)$ has at least $d+1$
 sign changes.
\end{lemma}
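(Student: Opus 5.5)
We argue by contradiction, using a polynomial that changes sign exactly where the coefficients do. Suppose the sign sequence $\operatorname{sgn}(c_1),\dots,\operatorname{sgn}(c_s)$ has at most $d$ sign changes. Group the indices into maximal blocks of consecutive equal signs; there are $m\le d+1$ blocks, so there are $m-1\le d$ boundaries between consecutive blocks. For each boundary between block ending at index $i$ and block starting at index $i+1$, pick a separating point $y$ with $x_i<y<x_{i+1}$. This yields points $y_1<\dots<y_{m-1}$.

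Define the polynomial
\[
 p(x)=\varepsilon\prod_{k=1}^{m-1}(x-y_k),
\]
where the sign $\varepsilon\in\{\pm1\}$ is chosen so that $p(x_s)$ has the same sign as $c_s$. The polynomial $p$ has degree $m-1\le d$, and its roots are all simple and lie strictly between the $x_i$. Hence $p$ changes sign exactly at each $y_k$ and nowhere else. It follows that $\operatorname{sgn} p(x_i)=\operatorname{sgn}(c_i)$ for every $i$: the two sign patterns agree on the last block, and both flip precisely when we cross a block boundary, moving leftward. In particular $c_i\,p(x_i)>0$ for all $i$, because no $x_i$ is a root of $p$.

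Now write $p(x)=\sum_{j=0}^{d}a_jx^j$, padding with zero coefficients if $m-1<d$. The hypothesis gives
\[
 \sum_{i=1}^s c_i\,p(x_i)=\sum_{j=0}^d a_j\sum_{i=1}^s c_ix_i^j=0.
\]
But the left-hand side is a sum of $s\ge1$ strictly positive terms, which is a contradiction. (We have $s\ge1$ since the $c_i$ are nonzero; the case $s=0$ is vacuous or excluded.) Therefore the sign sequence has at least $d+1$ sign changes.

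The only points needing care are that the degree bound $m-1\le d$ uses precisely the assumption of at most $d$ sign changes, and that the separating points are chosen strictly between the consecutive $x_i$, so that no $p(x_i)$ vanishes. The $m=1$ case, with all $c_i$ of one sign, is covered by the constant polynomial $p=\varepsilon$, which just uses the equation $j=0$.
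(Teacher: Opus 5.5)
Your proof is correct and is essentially the paper's argument: you place a separating point at each sign change, take the polynomial of degree at most $d$ with those simple roots and the right overall sign, and contradict $\sum_i c_i\,p(x_i)=0$ using positivity of every term. One small quibble: nonzero $c_i$ does not by itself force $s\ge1$, and the lemma genuinely needs $s\ge1$ as a tacit assumption (the paper also leaves this implicit).
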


\begin{proof}
 Suppose for a contradiction that there are only $m\le d$ sign changes.  Choose one real number
 strictly between the two consecutive support points at each sign change.
 After changing an overall sign if necessary, the polynomial $f$ whose roots
 are these $m$ numbers has the same sign as $c_i$ at every $x_i$.  This implies that
 \[
 \sum_{i=1}^s c_i f(x_i)>0.
 \]
 On the other hand, $\deg f\le d$, so expanding $f$ shows that the sum should be zero, giving us the desired contradiction.
\end{proof}

Let $t_1<t_2<\dots < t_n$ be real numbers such that $P = \{\gamma_{2r}(t_1),\dots,\gamma_{2r}(t_n)\}$.  This gives us an order of the vertices.  We consider
$2r$-subsets of the form
\begin{equation}\label{eq:near-facet}
 A=\{i,j,k_1,k_1+1,\dots,k_{r-1},k_{r-1}+1\},
\end{equation}
where
\[
 i<j<k_1<k_1+1<\cdots<k_{r-1}<k_{r-1}+1.
\]
We call $(i,j)\subset \rr$ the \emph{free interval} of $A$.  For $r=1$, these are simply
all pairs $\{i,j\}$.

We say that two intervals $(i,j)$ and $(i',j')$ in $\rr^1$ properly cross if either $i < i' < j < j'$ or $i' < i < j'< j$.  The next lemma is the key geometric component of the proof.  Intuitively, if $P$ is a subset of the moment curve and $p \in \conv P$, two convex combinations with positive coefficients that generate $p$
alternate.  The consecutive pairs in \eqref{eq:near-facet} can contribute at
most one vertex each to an alternating subsequence.  Both free endpoints are
therefore forced to appear, and the two free intervals must properly cross.

\begin{lemma}\label{lem:crossing-intervals}
 Let $A$ and $B$ be distinct sets of the form \eqref{eq:near-facet} such that
 \[
 \relint\conv\{\gamma_{2r}(t_a):a\in A\}
 \cap
 \relint\conv\{\gamma_{2r}(t_b):b\in B\}
 \neq\emptyset.
 \]
Then their free intervals $(i,j)$ and $(i',j')$ properly cross.
\end{lemma}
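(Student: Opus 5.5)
Suppose a point $p$ lies in both relative interiors. Then we can write
\[
p=\sum_{a\in A}\lambda_a\gamma_{2r}(t_a)=\sum_{b\in B}\mu_b\gamma_{2r}(t_b),
\]
with all $\lambda_a,\mu_b>0$ and $\sum\lambda_a=\sum\mu_b=1$. Subtracting gives coefficients $c_x$ supported on $A\cup B$, with $c_x=\lambda_x-\mu_x$ on common indices. These satisfy $\sum_x c_x t_x^j=0$ for $j=0,1,\dots,2r$: the case $j=0$ comes from the equal coefficient sums, and $j\ge1$ from the coordinates.

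The first step is to check that the $c_x$ are not all zero. If they were, then $\lambda$ and $\mu$ would coincide, so $A=B$, a contradiction. More precisely, general position of points on the moment curve says that any $2r+1$ of them are affinely independent. So if $p$ had two distinct positive representations, their difference would be a nontrivial affine dependence. Dropping the indices with $c_x=0$, we apply \cref{lem:sign-changes} with $d=2r$. It tells us that the nonzero $c_x$, listed in increasing order of index, have at least $2r+1$ sign changes, so there are at least $2r+2$ alternating entries. Moreover, a nonzero $c_x$ is positive only if $x\in A$ and negative only if $x\in B$.

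The second step bounds how the alternation can be realized. Take an alternating subsequence $x_0<x_1<\dots<x_{2r+1}$ of support points with $c_{x_0},c_{x_1},\dots$ strictly alternating in sign. Positive entries lie in $A$ and negative ones in $B$, so exactly $r+1$ of them come from $A$ and $r+1$ from $B$.

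The key claim is that a consecutive pair $\{k,k+1\}$ of $A$ contributes at most one of the positive entries. Between $k$ and $k+1$ there is no integer index, so no negative entry can separate them. Hence the pair supplies at most one $A$-element to the alternating sequence. The $A$-elements available are therefore $i$, $j$, and at most one from each of the $r-1$ pairs, for at most $r+1$ in total. Since exactly $r+1$ are needed, both $i$ and $j$ must appear as positive entries, and symmetrically both $i'$ and $j'$ must appear as negative entries.

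Some care is needed because an index may lie in $A\cap B$ with $c_x$ of either sign. This does not affect the count, since a positive entry always lies in $A$ and the pair argument uses only the ordering of indices.

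The final step, which I expect to be the main obstacle, is deducing proper crossing from the positions of $i,j,i',j'$ in the alternating sequence. Every element of $A$ larger than $j$ belongs to some pair, so all pairs lie after $j$, and $i,j$ are the first two $A$-elements. In the alternating sequence the positive entries therefore begin with $i,j$, and the negative entries likewise begin with $i',j'$.

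Suppose the sequence starts with a positive entry. Then it reads $i,i',j,j',\dots$, because the first two positive entries are the first and third terms and the first two negative entries are the second and fourth. This gives $i<i'<j<j'$. If it starts negative, it reads $i',i,j',j,\dots$, giving $i'<i<j'<j$.

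A subtle point is whether an element of a pair of $B$ could precede $j'$ and break this pattern. It cannot, since every pair of $B$ lies after $j'$, so the ordering argument goes through. All inequalities are strict because the indices in the sequence are distinct. Hence the free intervals properly cross.
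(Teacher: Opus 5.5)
Your proof is correct and follows the paper's argument: subtract the two positive representations, apply \cref{lem:sign-changes} to get an alternating subsequence of length $2r+2$, note that each consecutive pair $\{k_s,k_s+1\}$ contributes at most one term, so both free vertices of $A$ and of $B$ must appear, and then deduce the crossing. Your last step reads off $i,i',j,j'$ (or $i',i,j',j$) as the first four terms, using that the free vertices are the two smallest elements of each set; this is a slightly cleaner form of the paper's case analysis from $i\le i'$. The aside about affine independence of $2r+1$ moment-curve points is unnecessary (and proves nothing as stated), since $c\equiv 0$ already forces $A=B$ by positivity of the coefficients.
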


For example, in dimension four the tetrahedra we take are those with vertices of the form $\{\gamma_4(t_i), \gamma_4(t_j), \gamma_4(t_k), \gamma_4(t_{k+1})\}$ with $i < j < k$.

\begin{proof}
 Let $q$ lie in the two relative interiors.  There are strictly positive
 coefficients $\alpha_a$ and $\beta_b$, each family summing to one, such that
 \[
 \sum_{a\in A}\alpha_a
 (1,t_a,t_a^2,\dots,t_a^{2r})
 =
 \sum_{b\in B}\beta_b
 (1,t_b,t_b^2,\dots,t_b^{2r}).
 \]
 This means that
 \[
 0  = \sum_{a\in A}\alpha_a
 (1,t_a,t_a^2,\dots,t_a^{2r})
 -
 \sum_{b\in B}\beta_b
 (1,t_b,t_b^2,\dots,t_b^{2r})
 \]
 Combine coefficients at common support points and delete zero coefficients.
 The resulting nonzero coefficients annihilate all polynomials of degree at
 most $2r$.  By \cref{lem:sign-changes}, its signs have at least $2r+1$
 changes, so its support contains an alternating subsequence of length
 $2r+2$.  This subsequence has $r+1$ positive and $r+1$ negative terms.

 Consider the positive terms, which come from $A$.  The subsequence uses at
 most one vertex from each consecutive pair $\{k_s,k_s+1\}$, since no support
 point lies strictly between these two indices.  To obtain $r+1$ positive
 terms, it must therefore use both free vertices $i,j$ and one vertex from
 each of the $r-1$ consecutive pairs.  The same conclusion holds for the two
 free vertices $i',j'$ of $B$ among the negative terms.

 Assume that $i\le i'$.  Equality is impossible because the same reduced
 support point cannot occur with both signs, so $i<i'$.  Since $i$ and $j$
 both occur as positive terms in an alternating sequence, a negative support
 point lies strictly between them.  As $i'$ is the first vertex of $B$, this
 gives $i'<j$.  Similarly, a positive support point lies strictly between
 $i'$ and $j'$.  If $j'\le j$, no point of $A$ can do this: $i<i'<j'\le j$
 and every vertex of $A$ other than $i,j$ lies after $j$.  This implies that $j<j'$, and
 $i<i'<j<j'$, as we wanted to show.
\end{proof}

We will use the following one-dimensional charging observation.

\begin{lemma}\label{lem:interval-charge}
 Let $\mathcal I$ be a family of intervals in $\rr$ with integer endpoints such that every pair properly crosses.  Then
 \[
 \sum_{(a,b)\in\mathcal I}\frac{1}{b-a}\le1.
 \]
\end{lemma}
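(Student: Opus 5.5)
Assume $\mathcal I$ is finite and nonempty. Order its intervals by left endpoint, $(a_1,b_1),\dots,(a_m,b_m)$ with $a_1\le a_2\le\cdots\le a_m$.

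\emph{Monotone structure.} Any two distinct intervals in $\mathcal I$ properly cross, so their left endpoints are distinct and their right endpoints are distinct. Moreover the smaller left endpoint belongs to the interval with the smaller right endpoint. Hence
\[
a_1<a_2<\cdots<a_m<b_1<b_2<\cdots<b_m .
\]
Here $a_m<b_1$ holds because $(a_1,b_1)$ and $(a_m,b_m)$ cross, which requires $a_m<b_1$. If $m=1$ the claim reads $1/(b_1-a_1)\le 1$, which holds since $b_1-a_1\ge1$ for integer endpoints.

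\emph{Length bound.} Fix the shortest interval $(a_k,b_k)$ and set $\ell=b_k-a_k$. I claim $m\le \ell$. The left endpoints $a_{k+1},\dots,a_m$ are distinct integers in $(a_k,b_1)\subseteq(a_k,b_k)$. The right endpoints $b_1,\dots,b_{k-1}$ are distinct integers in $(a_m,b_k)\subseteq(a_k,b_k)$. Moreover every $a$ in the first list is less than every $b$ in the second, since $a_m<b_1$. So these $m-1$ integers are pairwise distinct and all lie strictly inside $(a_k,b_k)$. That open interval contains only $\ell-1$ integers, giving $m-1\le \ell-1$, i.e.\ $m\le\ell$.

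\emph{Conclusion.} Each interval has length at least $\ell$, so
\[
\sum_{(a,b)\in\mathcal I}\frac{1}{b-a}\le \frac{m}{\ell}\le 1 .
\]
This is exactly Matou\v{s}ek's charging argument recast in one dimension. The only step needing care is the length bound: the left endpoints after $a_k$ and the right endpoints before $b_k$ must all be distinct and all lie in $(a_k,b_k)$. This is guaranteed by the separation $a_m<b_1$, which is the key consequence of pairwise crossing.
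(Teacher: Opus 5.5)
Your proof is correct and takes the same approach as the paper. You choose a shortest interval of length $\ell$, note that each other interval contributes a distinct integer endpoint strictly inside it, deduce $|\mathcal I|\le \ell$, and bound each weight by $1/\ell$. Your ordering $a_1<\cdots<a_m<b_1<\cdots<b_m$ just makes explicit why those interior endpoints are distinct, a point the paper asserts without detail.
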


\begin{proof}
 Choose an interval $(a,b)$ of minimum length $\ell=b-a$.  Every other
 interval has exactly one endpoint among the $\ell-1$ integers strictly
 between $a$ and $b$.  These interior endpoints are distinct.  Therefore
 $|\mathcal I|\le\ell$.  All intervals have length at least $\ell$, and the
 claimed inequality follows.
\end{proof}

Now we are ready to prove \cref{thm:even-main}.

\begin{proof}[Proof of \cref{thm:even-main}]
 Assign to a simplex $A$ of the form \eqref{eq:near-facet}, with free interval
 $(i,j)$, the weight
 \[
 w(A)=\frac{1}{j-i}.
 \]
 Let $Q$ be a blocking set and assign every selected simplex to one blocker in
 its relative interior.  By \cref{lem:crossing-intervals}, the free intervals
 of all simplices assigned to the same blocker are pairwise properly crossing.
 By \cref{lem:interval-charge}, the total weight assigned to one blocker is at
 most one.  Therefore,
 \[
 |Q|\ge\sum_A w(A),
 \]
 where the sum is over all sets of the form \eqref{eq:near-facet}.

 Fix the free length $\ell=j-i$.  A standard argument with separators shows that the number of selected simplices of free length $\ell$ is
 
 \[
 \binom{n-\ell-r+1}{r}.
 \]
 Therefore,
 \[
 |Q|\ge
 \sum_{\ell=1}^{n-2r+1}\frac{1}{\ell}
 \binom{n-\ell-r+1}{r}.
 \]
For fixed $r$, we have
 \[
 \binom{n-\ell-r+1}{r}
 =\frac{n^r}{r!}+O_r\bigl(n^{r-1}(\ell+1)\bigr).
 \]
 Dividing by $\ell$ and summing gives
 \[
 |Q|\ge\frac{n^r}{r!}\left(\sum_{\ell=1}^{n-2r+1}\frac{1}{\ell} \right)-O_r(n^r)
 =\frac{1}{r!}n^r\log n-O_r(n^r).
 \]
\end{proof}




\begin{corollary}\label{cor:even-excess}
 For fixed $r$ and $P$ as in \cref{thm:even-main}, we have
 \(\displaystyle
 \frac{b_{2r}(P)}{\tau_{2r}(P)}=\Omega_r(\log n)\).
\end{corollary}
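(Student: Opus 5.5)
The corollary follows by combining the lower bound of \cref{thm:even-main} with an upper bound $\tau_{2r}(P)=O_r(n^r)$. By \cref{thm:even-main}, $b_{2r}(P)\ge \frac{1}{r!}n^r\log n-O_r(n^r)$, which is at least $c_r n^r\log n$ for $n$ large. So it suffices to show that every triangulation $T$ of $P$ with vertex set $P$ has $f_{2r-1}(T)=O_r(n^r)$.

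For that upper bound I will use the Upper Bound Theorem for triangulated balls, or more elementarily the following reduction. Let $T$ be such a triangulation. Each $(2r-1)$-face of $T$ is either a boundary facet of $\conv(P)$ or an interior face shared by exactly two maximal simplices. Each maximal simplex has $2r+1$ facets. Therefore
\[
f_{2r-1}(T)\le (2r+1)f_{2r}(T).
\]
It is then enough to bound the number of maximal simplices.

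To bound $f_{2r}(T)$, I will use the standard lifting or Stanley argument. The boundary of a triangulated $2r$-ball on $n$ vertices, coned from a new vertex, gives a triangulated $2r$-sphere on $n+1$ vertices. The maximal simplices of $T$ inject into the facets of this sphere. Stanley's Upper Bound Theorem for simplicial spheres \cite{Stanley1975} then says this sphere has at most as many facets as the cyclic $2r$-polytope on $n+1$ vertices. That number is $\frac{n+1}{n+1-r}\binom{n+1-r}{r}=O_r(n^r)$.

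If I want to avoid the sphere machinery, I would instead use a volume or $h$-vector argument. The $h$-vector of a triangulated ball with $n$ vertices satisfies $h_i\le\binom{n-2r-1+i}{i}$ together with Dehn--Sommerville-type symmetry, so $f_{2r}=\sum_i h_i=O_r(n^r)$. Citing \cite{Ziegler1995,Stanley1975}, as the paper already does for $\tau_{2r}(P)=\Theta_r(n^r)$, should suffice.

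Dividing the two bounds gives
\[
\frac{b_{2r}(P)}{\tau_{2r}(P)}\ge \frac{c_r n^r\log n}{C_r n^r}=\Omega_r(\log n).
\]
This holds for large $n$; for small $n$ the constant absorbs the finitely many cases, since $\tau_{2r}(P)\ge 1$. The only nontrivial step is the $O_r(n^r)$ upper bound on faces of a triangulation, which is the Upper Bound Theorem. Making the coning reduction precise is a minor technicality, and I will rely on the cited references for it.
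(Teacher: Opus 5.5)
Your proposal is correct and is essentially the paper's argument: you bound $\tau_{2r}(P)=O_r(n^r)$ by gluing $T$ to a cone over $\partial T$ to get a simplicial $2r$-sphere on $n+1$ vertices, apply Stanley's Upper Bound Theorem, and divide into the bound of \cref{thm:even-main}; the only difference is that the paper applies the UBT directly to $(2r-1)$-faces rather than going through $f_{2r}$. One wording fix: the sphere is $T$ glued to the cone over $\partial T$, not the cone over $\partial T$ by itself. The $h$-vector aside is unnecessary, and for balls it would need the boundary-corrected Dehn--Sommerville relations rather than plain symmetry.
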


\begin{proof}
 The cyclic $2r$-polytope has $\Theta_r(n^r)$ facets, so every triangulation
 already has $\Omega_r(n^r)$ codimension-one faces.  
 Conversely, glueing $T$ to a cone over its boundary gives a simplicial $2r$-sphere on $n+1$ vertices.
 
 The Upper Bound Theorem for simplicial spheres
 \cite{Stanley1975} gives $O_r(n^r)$ faces of dimension $2r-1$.  Therefore
 $\tau_{2r}(P)=\Theta_r(n^r)$, and the result follows from
 \cref{thm:even-main}.
\end{proof}

\section{Odd-dimensional moment curves}\label{sec:odd}

The proof for even dimension uses the special construction for simplices, which allows us to reduce the analysis to their free intervals.  In odd dimension, the argument does not generalize.  Moreover, for equally spaced points on the moment curve in odd
dimension, the simplices can instead be grouped into families with a common
interior point to give a set of blockers that matches the lower bound for the number of codimension-one faces in triangulations.

The lower bound comes from a standard regular triangulation of the cyclic
polytope; see \cite{DeLoera2010}.  We include a short polynomial
description because it also explains the count.

\begin{proof}[Proof of \cref{thm:odd-main}: lower bound]
 The case $n=2r$ is immediate, so we consider $n > 2r$.  Lift the points from $P=\{\gamma_{2r-1}(1), \dots, \gamma_{2r-1}(n)\}$ in $\gamma_{2r-1}$ in $\rr^{2r-1}$ to
 \[
 (t,t^2,\dots,t^{2r})\in\rr^{2r}.
 \]
 We call $P'$ the lift of $P$.  A subset of $2r$ points from the lifted moment curve is a lower facet of $P'$ precisely when the
 monic polynomial with these $2r$ roots is nonnegative at all the sampled
 parameters.  Since its sign is negative between the first and second root,
 between the third and fourth root, and so on, this happens exactly when the
 roots form $r$ disjoint pairs of consecutive integers.  Their projections
 are the maximal simplices of the lower regular triangulation.  The number of
 such matchings in a path on $n$ vertices is
 \[
 \binom{n-r}{r}.
 \]

 The boundary of the cyclic $(2r-1)$-polytope has
 \[
 2\binom{n-r}{r-1}
 \]
 facets, by the usual facet count for cyclic polytopes
 \cite{Ziegler1995}.  If $I$ is the number of internal codimension-one faces,
 double counting gives
 \[
 2r\binom{n-r}{r}=2I+2\binom{n-r}{r-1}.
 \]
 Therefore, the total number of codimension-one faces is
 \[
 I+2\binom{n-r}{r-1}
 =r\binom{n-r}{r}+\binom{n-r}{r-1}
 =(n-2r+2)\binom{n-r}{r-1}.
 \]
 This is a lower bound for $b_{2r-1}(P)$.
\end{proof}

\begin{proof}[Proof of \cref{thm:odd-main}: upper bound]
It is convenient to use homogeneous moment
vectors
\[
 v(t)=(1,t,\dots,t^{2r-1}).
\]

Recall that a polynomial $Q$ of degree at most $2r-1$ corresponds to a hyperplane containing points of $\gamma_{2r-1}$.  In particular, $Q(t) = 0$ if and only if $\gamma_{2r-1}(t)$ is contained in the corresponding hyperplane.  

We will construct a set of points that intersects the relative interiors of the simplices of  $P=\{\gamma_{2r-1}(1),\dots, \gamma_{2r-1}(n)\}$ of dimension $2r-2$.  To do so, we group those simplices into small sets.  We then use the polynomial correspondence with hyperplanes to construct a large affine space common to all the hyperplanes that contain simplices corresponding to a small set.

For every polynomial $f$ of degree at most $2r-1$, let $\ell_f$ denote the
linear functional satisfying $\ell_f(v(t))=f(t)$.

Every $(2r-1)$-subset can be written uniquely as
\begin{equation}\label{eq:odd-alternation}
 a_1<u_1<a_2<u_2<\cdots<u_{r-1}<a_r.
\end{equation}
We group these simplices by the $(r-1)$-tuple \( U=(u_1,\dots,u_{r-1})\) and by the sum \(s=a_1+\cdots+a_r\).  For example, with $r=2$ we would have the triples $(1,4,7)$, $(2,4,6)$, and $(3,4,5)$ in the same group.  These form three flat triangles that share the vertex $\gamma_3(4)$ and one part of a ray starting from that vertex.

Fix one group $X$ and put
\[
 R(t)=\prod_{j=1}^{r-1}(t-u_j).
\]
Let $L$ be the affine subspace of the hyperplane $x_0=1$ which is the intersection of the hyperplanes defined by 
\begin{equation}\label{eq:L-equations}
 \ell_{R(t)\cdot t^k}=0\quad(0\le k\le r-2),
 \qquad
 \ell_{R(t)\cdot (t^r-st^{r-1})}=0.
\end{equation}
The simplex \( C=\conv\{v(u_1),\dots,v(u_{r-1})\}\)
lies in $L$.  This is because $u_i$ is a root of every single polynomial involved.

Consider a member of the group, and write
\[
 A(t)=\prod_{i=1}^r(t-a_i)
 =t^r-st^{r-1}+\sum_{k=0}^{r-2}c_kt^k.
\]
The hyperplane of the original simplex is given by $R(t)A(t)$ (it has the $2r-1$ needed roots).  Due to the expression above, it contains
$L$.  We now find a point of $L$ in the relative interior of the smaller simplex $\conv\{\gamma_{2r-1}(a_1), \dots, \gamma_{2r-1}(a_r)\}$.  


We look for this point in the form
\[
z_A=\sum_{i=1}^r \lambda_i v(a_i),
\qquad
\lambda_i>0,
\qquad
\sum_{i=1}^r\lambda_i=1.
\]

Let us first determine what the coefficients \(\lambda_i\) must
satisfy. Recall that \(L\) is defined by
\(
\ell_{R(t)\cdot t^k}=0\) for all $0\le k\le r-2$
and \(\ell_{R(t)\cdot(t^r-st^{r-1})}=0\). Since \( \ell_f(v(t))=f(t)\),
the first set of equations becomes
\[
\sum_{i=1}^r
\lambda_i R(a_i)a_i^k=0
\qquad (0\le k\le r-2).
\tag{*}
\]
These equations also imply the remaining defining equation of \(L\).
What we mean by this is that, writing
\[
A(t)=\prod_{i=1}^r(t-a_i)
=t^r-st^{r-1}+\sum_{k=0}^{r-2}c_kt^k,
\]
the equality \(A(a_i)=0\) gives
\[
a_i^r-sa_i^{r-1}
=-\sum_{k=0}^{r-2}c_ka_i^k.
\]
Therefore, \((*)\) implies
\[
\sum_{i=1}^r
\lambda_iR(a_i)(a_i^r-sa_i^{r-1})=0.
\]

We therefore need to find positive coefficients summing to one that
satisfy \((*)\). To achieve this, we use Lagrange interpolation.  To simplify the formulas, we use $A'(a_i)$ instead of $\prod_{j \neq i}(a_i - a_j)$.  For a polynomial $p(t)$ of degree at most $r-1$, the
Lagrange interpolation formula gives
\[
p(t)=
\sum_{i=1}^r
p(a_i)\frac{A(t)}{(t-a_i)A'(a_i)}.
\]
Comparing the coefficients, we obtain
\[
\sum_{i=1}^r\frac{a_i^k}{A'(a_i)}=0 \qquad \mbox{for $k=0,\dots,r-2$ and} \qquad \sum_{i=1}^r\frac{a_i^{r-1}}{A'(a_i)}=1.
\tag{**}
\]
In particular, taking \(p(t)=t^k\) gives the identities needed in
\((*)\).  We choose coefficients for which
\(\lambda_iR(a_i)\) is proportional to \(1/A'(a_i)\). Define
\[
\widetilde\lambda_i
=
\frac{1}{R(a_i)A'(a_i)}
\]
and normalize by setting
\(
\lambda_i
=
{\widetilde\lambda_i}/\left(
{\sum_{j=1}^r\widetilde\lambda_j}\right)\).
The choice of an alternating sequence
\[
a_1<u_1<a_2<\cdots<u_{r-1}<a_r
\]
implies that \(
\operatorname{sgn}R(a_i)
=
\operatorname{sgn}A'(a_i)\),
and therefore \(\widetilde\lambda_i>0\) for every \(i\). This means that the
\(\lambda_i\)'s are positive and sum to one.

By \((**)\), for every \(0\le k\le r-2\),
\[
\sum_{i=1}^r\lambda_iR(a_i)a_i^k
=
\frac{1}{\sum_{j=1}^r\widetilde\lambda_j}
\sum_{i=1}^r\frac{a_i^k}{A'(a_i)}
=0.
\]
Finally, this means that
\(
z_A\in
L\cap
\operatorname{relint}
\operatorname{conv}\{v(a_1),\dots,v(a_r)\}\).


Consider $H$ the affine hull of $\{v(u_i): u_i \in U\}$.  Then, $H$ is a hyperplane in $L$.  We claim that all the points $z_A$ are on the same side of $H$.  To do this, consider the polynomial $R(t)\cdot t^{r-1}$ and denote by $\varphi = \ell_{R(t)\cdot t^{r-1}}$ its associated linear form.  We have $H = L \cap \{\varphi = 0\}$.  However, when we evaluate $z_A$ in $\varphi$ we have

\begin{align*}
    \varphi(z_A) & = \sum_{i=1}^r \lambda_i R(a_i) a_i^{r-1} = \sum_{i=1}^r \left(\frac{1}{R(a_i)A'(a_i)\left(\sum_{j=1}^r \tilde{\lambda_j}\right)}\right)R(a_i)a_i^{r-1} = \\ & =\frac{1}{\sum_{j=1}^r \tilde{\lambda_j}}\sum_{i=1}^r \frac{a_i^{r-1}}{A'(a_i)} = \frac{1}{\sum_{j=1}^r \tilde{\lambda_j}} > 0,
\end{align*}

where the last cancellation comes from $(**)$ with $p(t) = t^{r-1}$. 

Let $x\in\relint(C)$ be any point, such as its centroid.  The affine span of $C$ is a hyperplane in $L$.
Choose a vector $w$ in the direction space of $L$, orthogonal to
$\aff(C)$ and pointing to the side on which all the points $z_A$ lie.  We can choose $\varepsilon>0$ such that all the $(2r-2)$-simplices in the group contain $x + \varepsilon w$ in their relative interiors.  In other words, one blocker suffices for all simplices in $X$, defined by the pair $(U,s)$.

It remains to count the groups.  The tuple $U$ satisfies
\[
 2\le u_1,\qquad u_{j+1}\ge u_j+2,
 \qquad u_{r-1}\le n-1,
\]
so there are \( \binom{n-r}{r-1}\)
choices.  For fixed $U$, the allowed values of each $a_i$ form an integer
interval.  The sum of their widths is $n-2r+1$, so the possible sums $s$ form
an interval of exactly $n-2r+2$ integers.  This means that the number of groups is
\[
 (n-2r+2)\binom{n-r}{r-1}.
\]
This gives a blocking set of size at most
$(n-2r+2)\binom{n-r}{r-1}$, as we wanted to show.
\end{proof}

\section{Remarks}

It is surprising that in odd dimensions the natural extension of the blocking conjecture, i.e., the logarithmic excess over the number of faces in a triangulation, fails.  The author believes that the lower linear bounds are far from optimal.

Among efforts by the author to improve the constant $41/13$, the most promising one is to bound the number of points in general position needed to generate $3$ compatible empty $5$-holes.  It is possible that $21$ points are enough, which would improve the lower bound in the Blocking Conjecture to $b(n)\ge\left(3 + \frac{3}{19}-o(1)\right)n$.  Of course, any lower bound that does not rely on counting $5$-holes would be much more interesting. 

\subsection*{Use of AI disclosure}

ChatGPT 5.6 suggested the grouping of codimension-one simplices used in the proof of Theorem 3 and identified the resulting behavior in odd dimensions. The author also used it for literature review.  The author independently verified every output used in the manuscript and assumes full responsibility for the results presented.


\end{document}